\numberwithin{equation}{section}
\newcommand{\Z}{\mathbb{Z}}
\newcommand{\N}{\mathbb{N}}
\newcommand{\R}{\mathbb{R}}
\newcommand{\Q}{\mathbb{Q}}
\newcommand{\Cm}{\mathbb{C}}
\newcommand{\eps}{\varepsilon}
\DeclareMathOperator{\supp}{supp} 
\DeclareMathOperator{\beqq}{\begin{equation}} 
\DeclareMathOperator{\eeqq}{\end{equation}}
\renewcommand{\phi}{\varphi}
\newcommand{\beq}{\begin{equation}}
\newcommand{\eeq}{\end{equation}}
\newtheorem{Thm}{Theorem}[section]
\newtheorem{theorem}[Thm]{Theorem}
\newtheorem{lemma}[Thm]{Lemma}
\newtheorem{remark}[Thm]{Remark}
\begin{document}

\sloppy
\title[Frames for compactly supported functions with irrational density]
{Frames for compactly supported functions with irrational density}

\author{Yurii Belov}
\address{Yurii Belov,
\newline Department of Mathematics and Computer Science
St. Petersburg State University,
14th Line 29b, Vasilyevsky Island, St. Petersburg, Russia, 199178,
\newline {\tt j\_b\_juri\_belov@mail.ru} }
\author{Aleksei Kulikov}
\address{Aleksei Kulikov,
\newline University of Copenhagen, Department of Mathematical Sciences,
Universitetsparken 5, 2100 Copenhagen, Denmark,
\newline {\tt lyosha.kulikov@mail.ru}  }

\begin{abstract} 
We find sufficient conditions on a compactly supported function $g$, $\supp g = [a,b]$ which guarantee that the Gabor system $$\mathcal{G}(g;\alpha,\beta)=\{e^{2\pi i \beta m x}g(x-\alpha n)\}_{m,n\in\mathbb{Z}}$$ is a frame for all $\alpha < b-a, \alpha\beta < 1, \alpha\beta \notin\Q$. These conditions are on one hand satisfied by almost all such functions, and on the other hand are explicit enough that we can give many concrete examples of the functions $g$ which give us a frame e.g. $g(x) = \exp(\frac{1}{x^4-1})\chi_{(-1,1)}(x)$.
\end{abstract}

\maketitle
\section{Introduction}
For a function $g\in L^2(\R)$ and the numbers $\alpha, \beta > 0$ we consider the Gabor system
 $$\mathcal{G}(g;\alpha,\beta):=\{g_{m,n}\}_{m,n\in\mathbb{Z}}=\{e^{2\pi i \beta m x}g(x-\alpha n)\}_{m,n\in\mathbb{Z}}$$
and ask when it is a frame, that is when there are constants $A, B > 0$ such that for all $h\in L^2(\R)$ we have
$$A\|h\|^2_2\leq\sum_{m,n\in\mathbb{Z}}|(h,g_{m,n})|^2\leq B\|h\|^2_2.$$
 One well-known necessary condition for this is $\alpha\beta \le 1$ \cite[Corollary 7.5.1]{Gro}, and the inequality is strict if the function $g$ is sufficiently smooth and decays fast enough \cite[Theorem 6.5.1]{Gro}. If the function $g$ is compactly supported i.e. $g(x) = 0, x\notin [a, b]$, then we also have a trivial necessary condition $\alpha \le b-a$ and if $\alpha = b-a$ then the system is a frame if and only if $\alpha\beta \le 1$ and $g, g^{-1}\in L^\infty ((a, b))$. 
 
 We show that if $g(x) \ne 0$ for $a < x < b$ and function $g$ is generic enough then these conditions are also sufficient for the system to be a frame if $\alpha\beta\notin \Q$. Notably, all previous results except recent preprint \cite{BK} concerned either some concrete functions, specific families of functions (rational functions, total positive functions, one-sided and two-sided exponentials, hyperbolic secant, Gaussian function, shifted sinc-function, Haar function, characteristic function of an interval, see \cite{BB, BKL, BS, DS, DZ, Jans2, JansStr, Jans, GroTP, Gro1, Gro2,L,S,SW}) or required strong restrictions on pairs $(\alpha,\beta)$ (see e.g. \cite{Faul, GL, GS}). Our results are to the best of our knowledge the first which apply to a wide range of functions while describing almost all of the frame set. For example, we prove the following theorem.
\begin{theorem}\label{main}
Let $f:\Cm\backslash A\to \Cm$ be an analytic function, where $A$ is some locally finite subset of $\Cm$. Assume that for some $a < b$ we have $A\cap (a, b)=\varnothing$, $f\in L^\infty((a, b))$ and $f(x) \ne 0, x\in (a, b)$, and that there exists $t\in \R$ such that on the horizontal line ${\rm{Im}}\,\, z = t$ there is exactly one essential singularity of the function $f$. Put $g(x) = f(x) \chi_{(a, b)}(x)$. Then for all $\alpha, \beta > 0$ such that $\alpha < b-a, \alpha\beta < 1, \alpha\beta \notin \Q$ the system $\mathcal{G}(g;\alpha,\beta)$ is a frame.
\end{theorem}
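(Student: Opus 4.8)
The plan is to pass to the fiberised (Walnut‑type) form of the frame inequality, reduce the frame property to a uniform two‑sided bound for an explicit family of infinite matrices, and then establish the lower bound using the analytic structure of $f$ together with the irrationality of $\gamma:=\alpha\beta$. First I would periodise in the modulation index $m$. Writing $a_l(x)=h(x+l/\beta)$, a direct Parseval (periodisation) computation gives
\begin{equation*}
\sum_{m,n\in\Z}|(h,g_{m,n})|^2=\frac1\beta\int_0^{1/\beta}\sum_n\Bigl|\sum_l a_l(x)\,\overline{g(x+l/\beta-\alpha n)}\Bigr|^2\,dx,\qquad \|h\|_2^2=\int_0^{1/\beta}\sum_l|a_l(x)|^2\,dx,
\end{equation*}
and since $h\mapsto(a_l(x))_l$ is an isometry $L^2(\R)\to L^2\bigl([0,1/\beta);\ell^2(\Z)\bigr)$, the system is a frame if and only if the fibre matrices
\begin{equation*}
(G_x)_{n,l}=\overline{f\bigl(x+(l-\gamma n)/\beta\bigr)}\,\chi_{(a,b)}\bigl(x+(l-\gamma n)/\beta\bigr),\qquad \gamma=\alpha\beta,
\end{equation*}
are uniformly bounded above and below on $\ell^2(\Z)$ for a.e.\ $x\in[0,1/\beta)$. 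Each row of $G_x$ has about $\kappa:=\beta(b-a)$ nonzero entries and each column about $\kappa/\gamma=(b-a)/\alpha>1$; the upper bound is then routine from $f\in L^\infty((a,b))$ and a Schur test exploiting this band structure, so the content of the theorem is the uniform lower bound $\|G_xa\|\ge c\|a\|$.

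When $\kappa=\beta(b-a)\le 1$ each row of $G_x$ has at most one nonzero entry, there is no cancellation, and the lower bound reduces to checking that every column is nonzero and bounded below; this follows from $\alpha<b-a$ (which forces every column to meet a row whose sampling point lies in a fixed compact subinterval of $(a,b)$) together with continuity and non‑vanishing of $f$ there. The substantial case is $\kappa>1$, where rows mix several $a_l$ and cancellation is possible. Here I would first prove fibrewise injectivity: suppose $a\in\ell^2$, $a\ne 0$, lies in $\ker G_x$. Setting $\Phi(w):=\sum_l\overline{a_l}\,f\bigl(x+(l-w)/\beta\bigr)$ — a locally finite sum because of the cutoff, hence piecewise real‑analytic — the kernel relation says precisely that $\Phi(\gamma n)=0$ for every $n\in\Z$.

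The key step, and the one I expect to be the main obstacle, is to convert the lone essential singularity into a rigidity statement that excludes $a\ne 0$. Continuing each analytic piece of $\Phi$ into $\Cm$, the only essential singularities it can acquire on the line $\mathrm{Im}\,w=-\beta t$ come from the unique essential singularity $z_0$ of $f$ on $\mathrm{Im}\,z=t$: they sit at $w=l+\beta x-\beta z_0$, one for each $l$ with $a_l\ne 0$, all on that single horizontal line and spaced by integers. Because there is exactly one essential singularity on $\mathrm{Im}\,z=t$, these occur at distinct points and cannot cancel one another (poles or branch points landing at the same $w$ cannot remove an essential singularity), so $\Phi$ has a genuine essential singularity at $w=l+\beta x-\beta z_0$ exactly when $a_l\ne 0$. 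The task is then to derive a contradiction from the coexistence of (i) this prescribed integer‑spaced family of essential singularities on a horizontal line and (ii) the vanishing of $\Phi$ on the full arithmetic progression $\gamma\Z\subset\R$. This is where $\gamma\notin\Q$ is indispensable: the irrationality makes $\{\gamma n\bmod 1\}$ equidistributed, so the real zero set ``probes all phases'' relative to the integer‑spaced singular pattern and cannot be absorbed by it. I would attempt the contradiction by weighing the wild growth of $\Phi$ forced near the singular line (Casorati–Weierstrass/Picard behaviour near $z_0$) against the constraint imposed by vanishing on the equidistributed progression $\gamma\Z$, via a Phragmén–Lindelöf/maximum‑principle estimate in a strip bounded by $\mathrm{Im}\,w=-\beta t$; the reconciliation of the piecewise analyticity of $\Phi$ (coming from the sharp cutoff $\chi_{(a,b)}$) with this global argument is the principal technical difficulty.

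Finally, fibrewise injectivity must be upgraded to the uniform lower bound. Since $x$ ranges over the compact set $[0,1/\beta]$ and $x\mapsto G_x$ is continuous away from the finitely many window‑change thresholds, I would argue by a normal‑families contradiction: a sequence $x_k$ and unit vectors $a^{(k)}$ with $\|G_{x_k}a^{(k)}\|\to 0$ would, after normalising the location of mass and extracting limits, produce a nonzero kernel element for a limiting fibre, contradicting injectivity. The only delicate point here is the behaviour of $f$ near the endpoints $a,b$, where $f$ may tend to $0$ and individual fibres degenerate; this is absorbed using the overlap $(b-a)/\alpha>1$, which guarantees that every frequency is seen through a translate in whose interior $f$ is bounded below.
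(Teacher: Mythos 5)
Your reduction to the fiber matrices is sound (it is the Ron--Shen criterion, which is also the paper's starting point), and the observation that the upper bound is routine is correct. But both halves of your lower-bound argument have genuine gaps. First, the central step --- deriving a contradiction from ``(i) integer-spaced essential singularities on a horizontal line'' plus ``(ii) vanishing of $\Phi$ on $\gamma\Z$'' --- is not carried out, and as stated it cannot be: the two properties are perfectly compatible, e.g.\ $\Phi(w)=\sin(\pi w/\gamma)\,\exp\bigl(1/(w-w_0)\bigr)$ vanishes on all of $\gamma\Z$ and has an essential singularity at $w_0$. Any contradiction must exploit the specific structure of $\Phi$ as a fixed $\ell^2$-combination of translates of a single function $f$, and your setup actually obstructs this: because of the sharp cutoff $\chi_{(a,b)}$, the function $\Phi$ is only \emph{piecewise} analytic, the active set of indices $l$ changes with $w$, and the vanishing $\Phi(\gamma n)=0$ constrains one analytic piece at each point while the essential singularities you invoke belong to continuations of \emph{different} pieces. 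There is no single analytic function to which both constraints apply, so the Phragm\'en--Lindel\"of scheme has nothing to act on.

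Second, even granting fiberwise injectivity, your compactness/normal-families upgrade to a uniform lower bound fails in infinite dimensions: injectivity of an operator on $\ell^2$ is strictly weaker than being bounded below, and a sequence of unit vectors $a^{(k)}$ with $\|G_{x_k}a^{(k)}\|\to 0$ can converge weakly to $0$ (mass spreading over more and more coordinates, as for $k^{-1/2}(1,\dots,1,0,\dots)$), so ``normalising the location of mass'' produces the zero vector as the limit and no kernel element at all. This is exactly the difficulty the paper's argument is designed to avoid: it deletes rows from $G(x)$ so that what remains is an orthogonal sum of \emph{finite} square blocks of uniformly bounded size, for which a uniform determinant bound is equivalent to uniform invertibility. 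Irrationality of $\alpha\beta$ then enters twice in ways your sketch misses: the shift structure $G(x-N\alpha+M/\beta)$ lets one move any $x$ into a fixed interval $I$ where $|\det M_x|\ge\delta$, and the distinctness of the arguments $x-\alpha n+m/\beta$ makes all entries of $M_x$ have essential singularities at \emph{different} points, so that an induction on leading principal minors (Laplace expansion: the new diagonal entry's essential singularity cannot be destroyed by the other terms) shows $\det M_x\not\equiv 0$. That finite-dimensional determinant argument is the substitute for your unproved rigidity statement, and some reduction of this kind appears unavoidable.
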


Prime example of the application of this theorem to have in mind is the function $f(z) = \exp(\frac{1}{z^4-1})$ with $a = -1, b = 1$. This function is obviously non-zero on $(-1, 1)$ and on the line ${\rm{Im}}\,\, z = 1$ it has exactly one essential singularity.

Windows from Theorem \ref{main} can belong to the Balian--Low class, see \cite[Section 8.4]{Gro}. Moreover, for any $s<1$ there exists a window $g$ satisfying the assumptions of Theorem \ref{main} such that $|\hat{g}(\xi)|\leq C\exp(-|\xi|^s)$, for example $g(x) = \exp({-(1-x^4)^{-N}})\chi_{(-1, 1)}(x)$ for $N > \frac{1}{1-s}$. Indeed, one can check that $|g^{(m)}(x)|\leq C^{m}(m!)^t$ for any $m\in\mathbb{N}$ and $t > 1 + \frac{1}{N}$. Hence, function $g$ belongs to the corresponding Gevrey class and we have $|\hat{g}(\xi)|\leq C\exp(-|\xi|^{s})$ for every $s < 1-\frac{1}{N}$. 

\medskip

While analyticity is a convenient way to get explicit examples from our methods, it is by no means necessary. In fact, our approach works for a generic or random function $g$. For example, we also have the following theorem.
\begin{theorem}
Let $h:\R\times\R\to \R$ be a smooth function vanishing outside of the triangle $0 < t < x < 1$ and positive inside of this triangle. Let $B:\R\to\Cm$ be a complex-valued Brownian motion starting at $1$, $B(0)=1$ with variance $1$.  Then almost surely for the function $g$ defined as $$g(x) = \int_0^x B(t)h(x,t)dt,$$ the system $\mathcal{G}(g; \alpha, \beta)$ is a frame for all $\alpha\beta < 1, \alpha\beta\notin\Q, \alpha < 1$.
\label{bth}
\end{theorem}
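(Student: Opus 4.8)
The plan is to deduce Theorem~\ref{bth} from the general sufficient condition for the frame property that also yields Theorem~\ref{main}: once we know that the random window $g(x)=\int_0^x B(t)h(x,t)\,dt$ is compactly supported with $\supp g=[0,1]$, is bounded and non-vanishing on $(0,1)$, and satisfies the non-degeneracy (genericity) hypothesis of that criterion, the frame property for all $\alpha<1,\ \alpha\beta<1,\ \alpha\beta\notin\Q$ follows at once. So the whole content is to verify these hypotheses almost surely. The soft requirements I would dispose of first. Since $h$ vanishes outside the triangle $0<t<x<1$ we have $g(x)=0$ for $x\le 0$ and for $x\ge 1$, whence $\supp g\subseteq[0,1]$. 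Because $h$ and all of its partial derivatives vanish on the diagonal $t=x$ (the boundary of the open triangle), differentiating under the integral sign produces no boundary contribution, so a routine induction gives $g\in C^\infty$; as $B$ is almost surely continuous, hence bounded, on $[0,1]$, we get $g\in L^\infty$.

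The first genuinely probabilistic point is that $g(x)\ne 0$ for every $x\in(0,1)$ almost surely, and here the use of a \emph{complex} Brownian motion is essential: the image $\{g(x):x\in(0,1)\}$ is a curve in $\Cm\cong\R^2$, and the obstruction $\{0\}$ has codimension two, so generically the curve misses it. To make this precise, write $B=1+W_1+iW_2$ with $W_1,W_2$ independent real Brownian motions, so that
\[
\re g(x)=\int_0^x h(x,t)\,dt+\int_0^x W_1(t)\,h(x,t)\,dt,\qquad
\im g(x)=\int_0^x W_2(t)\,h(x,t)\,dt
\]
are independent smooth Gaussian processes, each of strictly positive variance on $(0,1)$ since $h(x,\cdot)>0$ on $(0,x)$. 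I would first show the zero set $Z=\{x\in(0,1):\im g(x)=0\}$ is a.s. countable: on each compact $[\delta,1-\delta]$ the variance is bounded below, and a Bulinskaya-type argument (the joint density of $(\im g(x),(\im g)'(x))$ is bounded) shows $\im g$ and $(\im g)'$ a.s. have no common zero, so every zero is isolated and $Z$ is discrete. Conditioning on $W_2$ and using the independence of $\re g$, each $\re g(x)$ is a non-atomic Gaussian, hence $\sum_{x\in Z}\Pr(\re g(x)=0\mid W_2)=0$; integrating out $W_2$ gives $\Pr(\exists\,x\in(0,1):g(x)=0)=0$, and in particular $\supp g=[0,1]$ almost surely.

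The main obstacle is the final hypothesis, the non-degeneracy condition of the criterion. In the setting of Theorem~\ref{main} this condition is forced deterministically by the lone essential singularity on the line $\im z=t$, which prevents $g$ from obeying any of the exceptional relations that alone could destroy the lower frame bound. Via duality the lower bound amounts to a uniform linear-independence (Riesz) property of the shifted–modulated copies $e^{2\pi i m x/\alpha}g(x-n/\beta)$, whose failure is detected by the vanishing of finite determinants — Wronskian-type functionals built from the values and derivatives of $g$ at finitely many points of the overlap regions. My strategy would be to package the criterion's non-degeneracy as a countable intersection of events $\{F(B)\ne 0\}$ indexed by the countably many parameters entering these relations and by a dense set of evaluation points. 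Each $g^{(k)}(x)=\int_0^x B(t)\partial_x^k h(x,t)\,dt$ is a Gaussian linear functional of $B$, so each $F(B)$ is a polynomial in a nondegenerate Gaussian vector; once one checks that $F$ is not the identically-zero polynomial on path space (this is where the real work lies, exactly mirroring how the essential singularity excludes the bad configurations in the analytic case), the principle that a nonzero polynomial of a nondegenerate Gaussian vanishes with probability zero gives $\Pr(F(B)=0)=0$. Intersecting the countable family yields the non-degeneracy condition almost surely, and combining the three verifications with the general criterion completes the proof.
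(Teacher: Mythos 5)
Your overall plan is the paper's: reduce to the sufficient condition of Theorem \ref{framecon} (boundedness, support, $g^{-1}\in L^\infty$ away from the endpoints, and non-degeneracy of $\det(M_x)$), verify the soft conditions, prove a.s.\ non-vanishing of $g$ on $(0,1)$, and then handle the determinant condition probabilistically. Your non-vanishing argument (split $B=1+W_1+iW_2$, show the zero set of $\im g$ is a.s.\ discrete by a Bulinskaya-type argument, then use independence of $\re g$ to kill countably many points) is a legitimate alternative to the paper's discretization argument, though it silently assumes non-degeneracy of the Gaussian pair $(\im g(x),(\im g)'(x))$ at every $x$, which for an arbitrary admissible $h$ is not checked.

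The genuine gap is in the last step. You propose to package the determinant condition as ``a countable intersection of events $\{F(B)\neq 0\}$,'' but the condition must hold simultaneously for \emph{all} admissible pairs $(\alpha,\beta)$ with $\alpha\beta\notin\Q$ --- an uncountable family --- and the entries of $M_x$ are $g$ evaluated at the points $x-\alpha n+\frac{1}{\beta}m$, which vary continuously with $(\alpha,\beta)$ (they are values of $g$ only, not derivatives; there is no Wronskian here). Fixing a countable dense set of parameters does not suffice: non-vanishing of $\det(M_x)$ at some $x$ is not stable under perturbation of $(\alpha,\beta)$ without a uniform lower bound, and producing that uniformity is exactly the hard part. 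The paper resolves this by making a countable subdivision only over combinatorial data (structure of $M_x$, ordering and minimal separation of the shifts), then treating the shifts as free parameters $s_l$ ranging over a \emph{compact} product of intervals, and proving by induction --- peeling off the variable $g(x+s_{m_l})$ of largest index, conditioning the Brownian motion up to the last time the remaining variables can see, and using compactness in $s$ to extract finitely many subintervals $J_j$ --- that almost surely, for \emph{every} admissible choice of the $s_l$, some $x\in I$ gives a non-zero determinant. Moreover, the two facts you defer as ``where the real work lies'' are precisely the content of the paper's proof: non-triviality of the determinant as a multilinear polynomial follows because the diagonal of $M_x$ consists of good pairs (identity permutation), and non-degeneracy of the Gaussian data is not a generic ``polynomial of a non-degenerate Gaussian'' principle but is proved by sequential conditioning at the points $y_1<\dots<y_{m+1}$ together with Lemma \ref{brown lemma}, showing the joint law of $(g(y_1),\dots,g(y_{m+1}))$ is absolutely continuous so that the finitely many deterministic equations $g(y_t)=-R/Q$ cannot all hold. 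Without these two arguments and the compactness mechanism, your outline does not yield the ``for all $(\alpha,\beta)$'' statement of the theorem.
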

The function $g$ is almost surely $C^\infty(\R)$ and so we get a plethora of functions from the Balian--Low class giving us a frame for all admissible parameters with irrational $\alpha\beta$. 

\begin{remark}
We consider the complex-valued Brownian motion only to guarantee that almost surely for all $x\in (0, 1)$ we have $g(x)\neq 0$. For complex-valued curves the probability of hitting $0$ is $0$, but if the function was real-valued it would be non-zero with positive probability strictly less than $1$. For the same reason we assumed that $h$ is positive for $0 < t < x < 1$, this can also be replaced with some generality assumption. 
\end{remark}
\medskip

 The most prominent way to construct non-frame Gabor systems with $\alpha\beta<1$ is to fix a rational $\alpha\beta$ and use matrix Zak transform criterion (see \cite[Theorem 8.3.3]{Gro}).  In particular, Lyubarskii and Nes in \cite{LN} discovered that if $g(x)=-g(-x)$ and $g$ belongs to the Feichtinger algebra, then systems $\mathcal{G}(g;1,n\slash (n+1))$ are non-frame for all $n\in\mathbb{N}$. For example, the system $\mathcal{G}(xe^{-x^2};1,1\slash2)$ is not a frame. So in a positive direction it is natural to consider irrational densities $\alpha\beta$.      


\subsection{Strategy of the proof} We begin by recalling the Ron--Shen criterion \cite{RonShen} for the Gabor system to be a frame (see e.g. \cite[Proposition 6.3.4]{Gro} for the form that we use).
\begin{theorem}
Let $g\in L^2(\R)$ and consider the infinite matrix $$G(x) = \left\{g\left(x - \alpha n + \frac{1}{\beta}m\right)\right\}_{n,m\in\mathbb{Z}}.$$ The system $\mathcal{G}(g;\alpha,\beta)$ is a frame if and only if for some $A, B > 0$ and for almost all $x\in \R$ and all $v\in \ell^2(\Z)$ we have
\begin{equation}
A||v||_2 \le ||G(x) v||_2 \le B||v||_2.
\label{meq}
\end{equation}
\label{RS}
\end{theorem}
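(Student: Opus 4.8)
The plan is to fiberize the frame inequality in the time variable, turning the global estimate into a pointwise spectral estimate for $G(x)$; this is the mechanism that makes the matrix $G(x)$ appear in the first place. First I would rewrite the analysis coefficients: for fixed $n$ put $H_n(x)=h(x)\overline{g(x-\alpha n)}$, so that $(h,g_{m,n})=\int_{\R}H_n(x)e^{-2\pi i\beta m x}\,dx$ is the value at frequency $\beta m$ of the Fourier transform of $H_n$. Summing in $m$ and applying Parseval to the Fourier series of the $\tfrac1\beta$-periodization $\sum_k H_n(x+\tfrac k\beta)$ gives
\[
\sum_{m}\abs{(h,g_{m,n})}^2=\frac1\beta\int_0^{1/\beta}\Bigabs{\sum_{k\in\Z}h\bigl(x+\tfrac k\beta\bigr)\,\overline{g\bigl(x+\tfrac k\beta-\alpha n\bigr)}}^2dx.
\]
Summing over $n$ and interchanging the sum with the single integral then yields
\[
\sum_{m,n}\abs{(h,g_{m,n})}^2=\frac1\beta\int_0^{1/\beta}\norm{G(x)\,w(x)}_2^2\,dx,
\]
where $w(x)\in\ell^2(\Z)$ is the fiber $w(x)_k=\overline{h(x+\tfrac k\beta)}$ (here $k$ fills the column slot of $G(x)$, and the conjugation is harmless for the norm).

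Next I would record the companion identity for the ambient norm,
\[
\norm{h}_2^2=\int_0^{1/\beta}\sum_{k\in\Z}\Bigabs{h\bigl(x+\tfrac k\beta\bigr)}^2dx=\int_0^{1/\beta}\norm{w(x)}_2^2\,dx,
\]
which exhibits $h\mapsto w(\cdot)$ as an isometric (conjugate-linear) bijection of $L^2(\R)$ onto $L^2\bigl([0,\tfrac1\beta);\ell^2(\Z)\bigr)$. Under this identification the two frame inequalities merge into the single statement that
\[
A\int_0^{1/\beta}\norm{w(x)}_2^2\,dx\le\frac1\beta\int_0^{1/\beta}\norm{G(x)w(x)}_2^2\,dx\le B\int_0^{1/\beta}\norm{w(x)}_2^2\,dx
\]
holds for \emph{every} vector field $w(\cdot)\in L^2\bigl([0,\tfrac1\beta);\ell^2(\Z)\bigr)$. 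It then remains to show that this family of integrated inequalities is equivalent to the pointwise bound $A'\norm{v}_2\le\norm{G(x)v}_2\le B'\norm{v}_2$ for almost every $x$ and all $v\in\ell^2(\Z)$, with $A'=\sqrt{\beta A}$ and $B'=\sqrt{\beta B}$; this is exactly \eqref{meq} after renaming the constants.

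The implication pointwise $\Rightarrow$ integrated is immediate by integration. The converse — integrated $\Rightarrow$ pointwise a.e.\ — is the technical heart and the step I expect to be the main obstacle. The upper bound amounts to showing that the essential supremum of the operator norm $\norm{G(x)}$ is at most $B'$, which follows by testing on fibers concentrated near a Lebesgue point. The genuinely delicate point is the lower bound, where a failure must be localized: if the set $E_\delta=\{x:\exists\,v\in\ell^2(\Z),\ \norm{v}_2=1,\ \norm{G(x)v}_2<A'-\delta\}$ had positive measure for some $\delta>0$, I would invoke a measurable selection theorem to pick a near-minimizer $v(x)$ depending measurably on $x$, and then feed the vector field $w(x)=v(x)\chi_{E_\delta}(x)$ into the integrated lower bound to reach a contradiction. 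Making this rigorous — establishing measurability of $x\mapsto\norm{G(x)}$ and of the approximate lower bound, and phrasing the whole correspondence as a direct-integral decomposition of the analysis operator into the fibers $G(x)$ — is where the real work lies; the preceding Fourier-analytic manipulations, including the interchange of summation and integration, are routine once they are justified first on a dense class of compactly supported continuous $h$.
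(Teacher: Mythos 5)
The paper does not prove this statement at all: Theorem \ref{RS} is the Ron--Shen criterion, which the authors simply recall from the literature, citing \cite{RonShen} and \cite[Proposition 6.3.4]{Gro}. So there is no internal proof to compare against; your proposal has to be judged against the standard proofs in those references, and it is in fact exactly that standard fiberization argument: periodize in each row to turn the analysis-operator norm into $\frac{1}{\beta}\int_0^{1/\beta}\|G(x)w(x)\|_2^2\,dx$, identify $h\mapsto w(\cdot)$ as an isometry onto $L^2([0,\tfrac1\beta);\ell^2(\Z))$, and then pass between the integrated and the pointwise (a.e.) inequalities. Your computation of the periodization identity and of the constants $A'=\sqrt{\beta A}$, $B'=\sqrt{\beta B}$ is correct, and the interchange of sum and integral is justified by monotone convergence since all terms are nonnegative; note also that Parseval for the periodization $P_n\in L^1$ holds as an identity in $[0,\infty]$ (if the coefficient sum is finite, uniqueness of Fourier coefficients forces $P_n\in L^2$), so no density argument in $h$ is really needed there.

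The one place where you reach for heavier machinery than necessary is the step ``integrated $\Rightarrow$ pointwise a.e.'' You do not need a measurable selection theorem, and invoking one creates measurability questions (is $E_\delta$ measurable? is the near-minimizer selectable?) that you would then have to resolve by hand. The standard shortcut: fix a finitely supported vector $v$ with rational entries; then $x\mapsto\|G(x)v\|_2^2$ is measurable (a countable sum of measurable functions), and testing the integrated inequality on $w(x)=v\chi_E(x)$ for arbitrary measurable $E\subset[0,\tfrac1\beta)$ gives $\beta A\|v\|_2^2\le\|G(x)v\|_2^2\le\beta B\|v\|_2^2$ for a.e.\ $x$, for that fixed $v$. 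Intersecting over the countable dense family of such $v$ yields a full-measure set of $x$ on which both bounds hold for all $v$ in the family; for such $x$ the matrix $G(x)$ then extends by continuity to a bounded operator on $\ell^2(\Z)$ with the same bounds (and the extension agrees with the matrix action, since the row vectors of $G(x)$ lie in $\ell^2$ once the upper bound holds on the dense family). Equivalently, your localization of a failure of the lower bound can be run with a countable partition of $E_\delta$ on each piece of which a single fixed $v_j$ from the dense family witnesses the failure, which is a measurable vector field by construction. With that replacement your argument is complete and is essentially the proof in the cited sources.
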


Note that if the function $g$ is bounded and compactly supported then the upper bound is always satisfied so we will be focusing on the lower bound.

The proofs of our results are based on the following three ideas: first, if we remove a row from the infinite matrix $G(x)$ then it would be only harder to prove the lower bound. Second, if after the removal of some rows the matrix $\tilde{G}(x)$ we get will be an orthogonal sum of finite-dimensional square matrices $G_k(x)$, $$\tilde{G}(x) = \bigoplus\limits_{k\in\Z} G_k(x),$$ 
$$
\newcommand*{\tempb}{\multicolumn{1}{|c}{0}}
\newcommand*{\tempv}{\multicolumn{1}{|c}{\vdots}}
\newcommand*{\temps}{\multicolumn{1}{|c}{\star}}
\newcommand*{\tempd}{\multicolumn{1}{|c}{\ddots}}
\begin{array}{c}
\begin{pmatrix} 
\star & \star & \ldots & \star & \tempb &  \ldots  & 0&\ldots  &0\\
\star & \star & \ldots & \star  & \tempb  & \ldots & 0 & \ldots  & 0\\
\vdots & \vdots&  & \vdots &   \tempv& \vdots  &  \vdots  & & \vdots \\
\star & \star & \ldots & \star   & \tempb & \ldots &  0 & \ldots  &0 \\
\cline{1-7}
0 & 0 & \ldots & 0 & \temps & \ldots  & \star & \tempv & 0    \\
0 & 0 & \ldots & 0 & \temps & \ldots & \star  & \tempv & 0   \\
\vdots & &   & \ddots &  \tempv & \ddots & &\tempd & \ddots&   \\
\cline{5-9}
0 & 0 & \ldots   & 0 & 0 &   \ldots& 0 & \temps & \star \\
0 & 0 & \ldots   & 0 & 0 &  \ldots & 0 &\temps & \star\\
\end{pmatrix},\\
\text{Square blocks } G_k(x),
\end{array}
$$

then the lower bound for $\tilde{G}(x)$ holds if and only if it holds uniformly for each of the matrices $G_k(x)$. Third, for finite-dimensional matrices of uniformly bounded sizes and with uniformly bounded entries, we have a uniform lower bound if and only if their determinants are uniformly bounded away from zero.

\medskip

This approach is similar to the proof of Theorem 1.3 from \cite{BKL}. Just like in that paper, we will, using the irrationality of $\alpha\beta$, get from one non-zero determinant for one $x$ to such determinants for all $x\in \R$ with uniformly bounded sizes. However, in \cite{BKL} the proof that at least one of the determinants is non-vanishing was technically the hardest step, as we had to do some lengthy computations with these determinants and rely on some slight algebraic miracles. On the other hand, in the present case, proof of the non-vanishing of some determinant is not much more difficult than the other parts of the proof. We think there are two reasons for this: first of all, rational functions that we studied in \cite{BKL} are not generic enough, which makes showing that there is no hidden conspiracy among these matrices harder. Second, unlike \cite{BKL} here we were able to use irrationality of $\alpha\beta$ again in the third step, which made proving non-vanishing of the determinant even easier. 

\medskip


In our setting, these ideas impose the following conditions on the function $g$ and $\alpha, \beta$:
\begin{enumerate}
\item If the function $g$ is compactly supported then each row of the matrix $G(x)$ has only finitely many non-zero terms and if in addition $\alpha\beta < 1$ then we can find a finite set of rows such that the matrix formed by them is square, so we can look at its determinant.
\item If $g(x)\neq 0, a < x < b$ and $\alpha < b-a$ then we can shift from any such square matrix to any other inside of the infinite matrix $G(x)$, and if $\alpha\beta \notin \Q$ then in a uniformly bounded number of steps we can get close to any other such square submatrix.
\item If the function $g$ is generic then the determinant of such square submatrix as a function of $x$ will not be identically zero, hence we can always get to a submatrix with non-zero determinant. Moreover, if $\alpha\beta\notin \Q$ then the elements appearing in the matrix $G(x)$ are all $g$ at different arguments. So, to prove that the determinant does not vanish completely for a generic function $g$, it is basically enough to show that there is at least one permutation in the expansion of the determinant giving us non-zero contribution. The permutation that we will consider will actually be an identity permutation.
\end{enumerate}

Technical conditions on the function $g$ which are sufficient for the sytem $\mathcal{G}(g;\alpha,\beta)$ to be a frame are somewhat cumbersome, so we will not present them in the introduction, see Theorem \ref{framecon} for the exact statement. It is important to note that these conditions are satisfied by a generic function $g$ for pretty much any definition of generic, see e.g. Theorem \ref{bth}. Alternatively, they are satisfied for  analytic functions satisfying some specific conditions like in Theorem \ref{main}.

\section{Structure of the Ron--Shen matrices and their finite-dimensional submatrices}\label{prep}
Recall that the Ron--Shen matrix $G(x)$ is defined by $\{g(x-\alpha n+\frac{1}{\beta}m)\}_{n,m\in\mathbb{Z}}$. Note that we have 
$g(x-\alpha n+\frac{1}{\beta}m)=0$ if $x-\alpha n+\frac{1}{\beta}m\not\in(a,b)$. So, in each row there are at most $\beta(b-a)+1$ non-zero elements and the places for potential non-zero elements form a segment, and these segments move to the right as $n$ increases. Specifically, we will call a pair $(n, m)\in \Z^2$ good if $x-\alpha n+\frac{1}{\beta}m\in(a,b)$. We will use the following purely combinatorial lemma.
\begin{lemma}
We have
\begin{enumerate}
    \item If $\alpha\beta < 1$ and $(n, m)$ and $(n, m+1)$ are good pairs then $(n+1, m+1)$ is also a good pair.
    \item If $\alpha\beta < 1$ and $\alpha < b - a$ then for each $m\in \Z$ there exists $n\in\mathbb{Z}$ such that $(n, m)$ is a good pair, $(n, m+1)$ is not a good pair and $x-\alpha n+\frac{1}{\beta}m\in [a+\eps, b-\eps]$ where $\eps = \frac{1}{2}\min(b-a-\alpha, \alpha, \frac{1}{\beta}-\alpha)$.
\end{enumerate}
\label{l21}
\end{lemma}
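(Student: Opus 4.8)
The plan is to treat both statements as elementary facts about the arithmetic progression $\{x - \alpha n + \tfrac{1}{\beta}m : n \in \Z\}$ obtained by fixing $m$ and letting $n$ run; throughout I abbreviate $y = x - \alpha n + \tfrac{1}{\beta}m$ and note that increasing $n$ by one decreases this value by exactly $\alpha$, while increasing $m$ by one increases it by $\tfrac{1}{\beta}$. The only structural inputs are the chain $0 < \alpha < \tfrac{1}{\beta}$ (from $\alpha\beta < 1$) and $\alpha < b-a$, together with the three bounds $\eps \le \half(b-a-\alpha)$, $\eps \le \half(\tfrac{1}{\beta} - \alpha)$, $\eps \le \half\alpha$ built into the definition of $\eps$.

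For part (i), suppose $(n,m)$ and $(n,m+1)$ are good. Goodness of $(n,m)$ gives $y > a$ and goodness of $(n,m+1)$ gives $y + \tfrac{1}{\beta} < b$. The quantity attached to $(n+1,m+1)$ is $y + \tfrac{1}{\beta} - \alpha$. For its lower bound I combine $y > a$ with $\tfrac{1}{\beta} - \alpha > 0$ to get $y + \tfrac{1}{\beta} - \alpha > a$; for its upper bound I combine $y + \tfrac{1}{\beta} < b$ with $-\alpha < 0$ to get $y + \tfrac{1}{\beta} - \alpha < b$. Hence $(n+1,m+1)$ is good. This part is immediate and uses only $\tfrac{1}{\beta} > \alpha$.

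For part (ii) I first convert the two requirements on $n$ into a single interval condition. Fixing $m$ and setting $z = x + \tfrac{1}{\beta}m$, I must find $n$ with $z - \alpha n \in [a+\eps, b-\eps]$ (which automatically makes $(n,m)$ good) and with $(n,m+1)$ \emph{not} good. Since $z - \alpha n > a$ already forces $z - \alpha n + \tfrac{1}{\beta} > a$, the only way $(n,m+1)$ can fail to be good is through the upper endpoint, i.e. this failure is equivalent to $z - \alpha n \ge b - \tfrac{1}{\beta}$. Thus I need a point of the progression $\{z - \alpha n\}_{n\in\Z}$ in the interval $I = \bigl[\max(a+\eps,\, b-\tfrac{1}{\beta}),\, b-\eps\bigr]$. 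The main, mildly delicate, step is to verify $|I| \ge \alpha$: its length equals $\min\bigl(b-a-2\eps,\ \tfrac{1}{\beta}-\eps\bigr)$, where the first term is $\ge \alpha$ because $2\eps \le b-a-\alpha$ and the second is $\ge \alpha$ because $\eps \le \tfrac{1}{\beta}-\alpha$. Since the progression has constant gap $\alpha$, any closed interval of length at least $\alpha$ contains one of its points; concretely, taking $n$ so that $z-\alpha n$ is the largest progression point not exceeding $b-\eps$, one gets $z-\alpha n > (b-\eps)-\alpha \ge \max(a+\eps,\, b-\tfrac{1}{\beta})$, so $z-\alpha n \in I$, which yields exactly the asserted $n$.

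I expect the one genuine point of care to be the assembly of the target interval $I$ in part (ii): recognising that the only possible failure of goodness for $(n,m+1)$ is $z-\alpha n \ge b-\tfrac{1}{\beta}$, and then checking that the three quantities in the definition of $\eps$ are precisely what forces $|I| \ge \alpha$ in both of the cases, according to whether $a+\eps \le b-\tfrac{1}{\beta}$ or not. Part (i) carries no real difficulty beyond the single sign observation $\tfrac{1}{\beta} - \alpha > 0$.
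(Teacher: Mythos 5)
Your proof is correct and takes essentially the same route as the paper: part (i) is the identical two-inequality check using $0 < \alpha < \tfrac{1}{\beta}$, and in part (ii) your choice of $n$ (the largest progression point not exceeding $b-\eps$) coincides with the paper's, which picks the minimal good $n$ and then shifts by one when the value lands in $(b-\eps,b)$. Your packaging of the case analysis as a single interval-length computation, $|I|\ge\alpha$, is a clean but equivalent reformulation.
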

\begin{proof}(i): Since $\alpha - \frac{1}{\beta} <0<\alpha$ we have $$a<x-\alpha n+\frac{1}{\beta}m< x - \alpha(n+1)+\frac{1}{\beta}(m+1) < x-\alpha n + \frac{1}{\beta}(m+1) < b.$$
Hence, $(n+1, m+1)$ is good.\\
(ii): Since $\alpha<b-a$ for any $m$ there exists at least one good pair $(n,m)$. Let $n$ be a minimal integer such that pair $(n,m)$ is good. Since $(n-1, m)$ is not good we have $x-\alpha(n-1) + \frac{1}{\beta}m \ge b$. Therefore, $x-\alpha n + \frac{1}{\beta}(m+1)\ge b$ because $0 < \frac{1}{\beta}-\alpha$. Thus, $(n, m+1)$ is not good.

If $x-\alpha n + \frac{1}{\beta}m \le a + \eps$ then $(n-1, m)$ would be a good pair which is not allowed. If $x-\alpha n + \frac{1}{\beta}m > b - \eps$ then we will instead look at the pair $(n+1, m)$. It is still a good pair but $(n+1, m+1)$ is still not a good pair. On the other hand, $x-\alpha (n+1) + \frac{1}{\beta}m \in [a+\eps, b-\eps]$.
\end{proof}

Let us consider any row $n$ with at least one good pair. Let $(n, m)$ be the first good pair in this row. Let $l$ be the biggest integer such that $x-\alpha(n+l) + \frac{1}{\beta}(m+l) < b$. Such an $l$ exists since $\frac{1}{\beta} - \alpha > 0$, and moreover it is uniformly bounded $l\le l(\alpha, \beta, a, b)$. Note that $l\ge 0$ and $(n+l, m+l)$ is good. From Lemma 2.1 (i) it follows that $(n+l, m+l+1)$ is not good. Consider the submatrix formed by the rows from $n$ to $n+l$. Note that in this submatrix all good pairs are in the square submatrix formed by the rows from $n$ to $n+l$ and columns from $m$ to $m+l$. Finally, all pairs $(n+k, m+k), 0\le k\le l$ are good.

For each $x\in [0, \alpha]$ we fix a good pair $(n_x, m_x)$ which is the first in its row. Note that we can do so with a uniform bound $|n_x|, |m_x|\le n(\alpha, \beta, a, b)$ by e.g. Lemma \ref{l21} (ii). By $M_x$ we denote the submatrix considered in the previous paragraph.

$M_x=$\begin{tikzpicture}[baseline=0cm,mymatrixenv]
    \matrix [mymatrix,text width=1.1em,align=center] (m)  
    {
\star & \star & \ldots & \star  &  0  & 0&0&\ldots  &\ldots &\ldots\\
\star & \star & \ldots & \star    & 0 & 0 &0& \ldots  & \ldots&\ldots\\
0 & \star & \star & \ldots & \star  & 0 & 0 & \ldots  &\ldots &\ldots \\
0 & 0 & \star & \star & \ldots & \star  & 0 & \ldots  &\ldots&\ldots\\
\vdots & &   & \ddots &   \ddots & & \ddots& &\vdots  &\vdots\\
0 & 0 & \ldots & 0 &  \star &\star & \ldots& \star  &\ldots&\ldots\\
0 & 0 & \ldots & 0  & \star &\star & \ldots  & \star &0&\ldots\\
0 & 0 & \ldots & 0 & 0 &\star & \ldots & \star & \star&\ldots\\
0 & 0 & \ldots & 0 & 0 & 0 & \star & \ldots & \star&\ldots\\
\vdots & &   & \ddots &   \ddots & & \ddots& &\vdots & \vdots \\
0 & &   & \ddots &   \ddots & & \ddots &\star&\ldots &\star \\
0 & &   & \ddots &   \ddots & & \ddots &\star &\ldots &\star \\
    };
    \draw (-6.1, 3.4)--(-6.1, 4.55);
    \draw (-6.1, 3.4)--(-1.6, 3.4);
    \draw(-1.6, 3.4)--(-1.6, 4.55);
    \draw(-1.6, 4.55)--(-6.1, 4.55);
    
    \draw (-0.8, -0.35)--(-0.8, 0.8);
    \draw (-0.8, -0.35)--(3.4, -0.35);
    \draw (3.4, -0.35)--(3.4, 0.8);
    \draw(3.4, 0.8)--(-0.8, 0.8);

    \draw (2.8, -3.2)--(2.8, -4.6);
    \draw (2.8, -3.2)--(6.1,-3.2);
    \draw(6.1,-3.2)--(6.1,-4.6);
    \draw(6.1,-4.6)--(2.8,-4.6);
\end{tikzpicture}

In the picture we boxed some of the pairs of rows where we get a catch-up of the column count to the row count because of lack of shift by $1$ to the right in the bottom row. When the column count becomes equal to the row count we get the square matrix.  Now, we are ready to state a sufficient condition for the system $\mathcal{G}(g;\alpha,\beta)$ to be a frame.

\begin{theorem}\label{framecon}
Assume that $\alpha\beta < 1$, $\alpha\beta\notin\Q$, $\alpha < b-a$, $g\in L^\infty(\R)$, $g(x) = 0, x\notin (a, b)$, $g^{-1}\in L^\infty([a+\eps, b-\eps])$, where $\eps = \frac{1}{2}\min(b-a-\alpha, \alpha, \frac{1}{\beta}-\alpha)$ and that there exists an open interval $I\subset (0, \alpha)$ and $\delta > 0$ such that $|\det (M_x)|\ge \delta$ for $x\in I$. Then $\mathcal{G}(g;\alpha,\beta)$ is a frame. 
\label{framecon}
\end{theorem}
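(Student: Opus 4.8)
The plan is to verify only the lower Ron--Shen bound in \eqref{meq}, since for a bounded, compactly supported $g$ the upper bound in Theorem \ref{RS} is automatic. Thus it suffices to produce a single $A>0$ such that $\norm{G(x)v}_2\ge A\norm{v}_2$ for almost every $x$ and every $v\in\ell^2(\Z)$. I will carry this out by implementing the three reductions described above, with the genuine work concentrated in propagating the single-interval determinant bound.

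First I would fix $x$ and thin out the rows of $G(x)$ greedily along the staircase. Starting a window at the first good pair of some row produces the square block $M^{(s)}$ (of uniformly bounded size $\le l(\alpha,\beta,a,b)$, as in the construction of $M_x$), occupying columns $[m,m+l]$ and containing all good pairs of its rows; I then delete the following rows until the first good column reaches $m+l+1$, and open the next window there. By Lemma \ref{l21}(i) the first good column increases by at most $1$ per row, so the windows tile the columns \emph{contiguously}, leaving no empty column (which would otherwise kill the lower bound by testing on a coordinate vector). After a permutation this realizes $\tilde G(x)=\bigoplus_k G_k(x)$ with $G_k(x)=M^{(s_k)}$ square. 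Since deleting rows only weakens the lower bound, it suffices to bound $\tilde G(x)$ below; and because the blocks have uniformly bounded size and entries bounded by $\norm{g}_\infty$, bounding the orthogonal sum below is equivalent to the scalar requirement $\inf_k\abs{\det G_k(x)}\ge\delta'>0$ uniformly in $x$.

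The crux is to upgrade the hypothesis $\abs{\det M_x}\ge\delta$ on $I$ to the uniform estimate $\abs{\det M^{(s)}}\ge\delta'$ over the whole compact range of admissible starting arguments $s$, which run through roughly $(a,a+\tfrac1\beta]$ rather than just the image of $I$. Here irrationality is decisive: $\det M^{(s)}$ depends only on the real number $s$, the lattice $\alpha\Z+\tfrac1\beta\Z$ is dense in $\R$ because $\alpha\beta\notin\Q$, and so every admissible $s$ can be transported into $J:=\{s(x):x\in I\}$ by a \emph{uniformly bounded} number of elementary one-step slides of the defining window (this is the content of idea (ii) above, now quantitative via Lemma \ref{l21} and the irrationality of $\alpha\beta$). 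I would then show that a single slide removes one boundary row and column and appends another, changing $\abs{\det}$ by a factor bounded above by a power of $\norm{g}_\infty$ and below by a power of $\norm{g^{-1}}_{L^\infty([a+\eps,b-\eps])}^{-1}$ --- this is exactly where the lower-boundedness of $g$ on the core enters, the normalization in Lemma \ref{l21}(ii) having been arranged so that the distinguished pivot of each window lies in $[a+\eps,b-\eps]$. Chaining a bounded number of such comparisons relates $\abs{\det M^{(s)}}$ for arbitrary $s$ to $\abs{\det M_x}\ge\delta$ for some $x\in I$, giving $\delta'$ and hence the frame property.

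I expect the slide/transfer step to be the main obstacle, and it splits into two parts that must be made precise. One must show, for each target $s$, that boundedly many admissible windows connect it to $I$ (a combinatorial statement driven by the minimality of the $\alpha\beta$-rotation, so that the number of steps is bounded uniformly in $s$), and one must establish the determinant comparison across a single slide, controlling both the change of block size by $\pm1$ and the ratio of the two determinants by $g$ and $g^{-1}$ on the core. Once this cocycle-type control is in place the argument closes, as the reductions of the first two paragraphs are formal given Theorem \ref{RS} and the structure of the windows $M_x$.
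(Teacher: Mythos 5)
Your first two paragraphs reproduce the paper's outer scaffolding faithfully (Ron--Shen, discarding rows to form a block-diagonal minorant, and the equivalence, for blocks of uniformly bounded size and entries, of a uniform lower bound with a uniform determinant bound). The genuine gap is exactly at the step you call the crux: the claimed ``cocycle-type control,'' i.e.\ that one elementary slide of a window (delete a boundary row and column, append another) changes $\abs{\det}$ by a factor bounded above and below in terms of $\norm{g}_{\infty}$ and $\norm{g^{-1}}_{L^\infty([a+\eps,b-\eps])}$. No such two-sided comparison exists: determinants of overlapping square windows are not multiplicatively related, and upper bounds on entries together with lower bounds on the distinguished pivots do not prevent the new determinant from vanishing --- already a slide of the type $\begin{pmatrix} 1 & 1 \\ 0 & 1 \end{pmatrix}\mapsto\begin{pmatrix} 1 & 1 \\ 1 & 1 \end{pmatrix}$ takes determinant $1$ to determinant $0$ with all entries and pivots under control. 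The claim is also implausibly strong at the level of conclusions: since the window matrix $M^{(s)}$ depends only on the real argument $s$, and every admissible $s$ is connected to $I$ by boundedly many slides, your comparison would upgrade the hypothesis ``$\abs{\det M_x}\ge\delta$ on one interval $I$'' to ``$\abs{\det M^{(s)}}\ge\delta'$ for \emph{every} admissible $s$.'' But nothing in the hypotheses of Theorem \ref{framecon} prevents $\det M_{x_0}=0$ at some $x_0$ outside the orbit of $I$; e.g.\ for a positive continuous window of size $2$ the determinant is $g(s)g(s-\alpha+\tfrac1\beta)-g(s+\tfrac1\beta)g(s-\alpha)$, which for generic such $g$ changes sign, and this is consistent with all the assumptions (this is also why Theorem \ref{maindet} only produces non-vanishing on \emph{some} interval). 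So some blocks $G_k(x)$ in your tiling may be singular, and your scheme cannot close.

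The paper's proof avoids this trap by never inverting a full window at an argument outside the orbit of $I$. After using shift-invariance and irrationality to assume $x\in I$, it locates the next return $\tilde x = x-\alpha\tilde n+\frac1\beta\tilde m\in I$ (with $\tilde n,\tilde m$ uniformly bounded, again by irrationality), and in between it keeps, for each intermediate column $m$, only the \emph{single} row furnished by Lemma \ref{l21}(ii), whose last good entry $g(x-\alpha n+\frac1\beta m)$ has argument in $[a+\eps,b-\eps]$; all other intermediate rows are discarded. Adjoining these rows to $M_x$ yields a square block-triangular matrix $\tilde M_x$ whose determinant \emph{factors}: $\det \tilde M_x = \det M_x \cdot \prod_m g\bigl(x-\alpha n+\tfrac1\beta m\bigr)$, and each factor in the product is bounded below because $g^{-1}\in L^\infty([a+\eps,b-\eps])$. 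Thus the blocks of the final orthogonal decomposition are these bridged matrices $\tilde M$ (one per return of the orbit to $I$), each uniformly invertible, and no determinant of a window at a ``bad'' argument ever enters the argument. Replacing your slide comparison by this bridging construction --- while keeping your first two paragraphs as they are --- is what is needed to complete the proof.
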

\begin{proof} We will use the Ron--Shen criterion (Theorem \ref{RS}). The upper estimate follows from the inclusion $g\in L^\infty(\R)$. So, we will focus only on  the lower inequality. For any $M,N\in\Z$ the Ron--Shen matrix $G(x-N\alpha+M\frac{1}{\beta})$ is the shift of the Ron--Shen matrix $G(x)$. By irrationality of $\alpha\beta$ we can without loss of generality assume that $x\in I$. 

Consider the square matrix $M_x$. We claim that it is invertible and that its inverse has uniformly bounded entries. Indeed, we have
$$M_x^{-1} = \det(M_x)^{-1}{\rm{adj}}(M_x).$$
Since $g\in L^\infty(\R)$ and the size $l_x$ of $M_x$ is uniformly bounded, the entries of ${\rm{adj}}(M_x)$ are uniformly bounded. On the other hand, since by assumption $|\det(M_x)|\ge \delta$, the inverse of the determinant is also uniformly bounded and the claim follows.

Next, we take the smallest $\tilde{n} > n_x+l_x, \tilde{m} > m_x+l_x$ such that $x-\tilde{n}\alpha + \frac{1}{\beta}\tilde{m}\in I$. Again, by irrationality of $\alpha\beta$ this is possible, and, moreover, we can have a uniform upper bound on $\tilde{n}, \tilde{m}$. Once we get to this good pair $(\tilde{n}, \tilde{m})$ we will encounter a matrix $M_{\tilde{x}}, \tilde{x} = x-\alpha \tilde{n} + \frac{1}{\beta}\tilde{m}$ which we know also has a lower bound on its determinant, and so it is invertible with a bounded inverse. 

For each $m\in (m_x + l_x, \tilde{m})$ we consider the row given by Lemma \ref{l21} (ii).  We will consider the matrix $\tilde{M}_x$ which is obtained from the matrix $M$ by adding all these rows. All other rows with indices between $n_x+l_x$ and $\tilde{n}$ we will throw away from the matrix $G(x)$ because, as we explained in the introduction, doing this will make our goal only harder.

$$
\newcommand*{\tempb}{\multicolumn{1}{|c}{0}}
\newcommand*{\tempv}{\multicolumn{1}{|c}{\vdots}}
\newcommand*{\temps}{\multicolumn{1}{|c}{\star}}
\newcommand*{\tempd}{\multicolumn{1}{|c}{\ddots}}
\tilde{M}_x=
\begin{pmatrix} 
\star & \star & \ldots & 0 & \tempb &  0  & 0&\ldots  &0\\
\star & \star & \ldots & 0  & \tempb  & 0 & 0 & \ldots  & 0\\
\vdots & \vdots&  & \vdots &   \tempv& \vdots  &  \vdots  & & \vdots \\
0 & 0 & \ldots & \star   & \tempb & 0 &  0 & \ldots  &0 \\
\cline{1-4}
0 & 0 & \star & \ldots & \star  & 0 & 0 & \ldots  &0 \\
0 & 0 & 0 & \star & \ldots & \star  & 0 & \ldots  &0\\
\vdots & &   & \ddots &   \ddots & & \ddots& &\vdots  \\
0 & \ldots  & 0 & 0 & 0 &  \star  & \ldots& \star & 0\\
0 & \ldots  & 0 & 0 & 0 & 0 & \star  & \ldots& \star\\
\end{pmatrix}.$$

Note that the size of $\tilde{M}_x$ is still uniformly bounded and, since it has a block-diagonal structure, its determinant is equal to $\det(M_x)$ multiplied by the values of $g$ at the pairs $(n, m)$ given by Lemma \ref{l21} (ii). Since all such pairs satisfy $x-\alpha n + \frac{1}{\beta} m \in [a+\eps, b-\eps]$, by the assumption $g^{-1}\in L^\infty ([a+\eps, b-\eps])$ they are uniformly bounded away from zero. So, the determinant of $\tilde{M}_x$ is also uniformly bounded away from zero and by the same reasoning it is invertible and its inverse has uniformly bounded entries.

By doing the same procedure after $M_{\tilde{x}}$, and also going up and left from the matrix $M_x$, we can throw away some rows from the Ron--Shen matrix $G(x)$ so that the resulting matrix can be decomposed into an orthogonal sum of uniformly bounded in size and  invertible with uniformly bounded inverses matrices. Orthogonal sum of operators is bounded from below if and only if each one of them is bounded from below with a uniform lower bound. It remains to note that finite-dimensional matrices of uniformly bounded sizes are bounded from below if and only if all entries of their inverses are bounded from above, just as we showed.
\end{proof}


Among the conditions in Theorem \ref{framecon}, the only two which are generally interesting and not always satisfied in practice are $g^{-1}\in L^\infty([a+\eps, b-\eps])$ and the assumption about $\det(M_x)$. The first one of them is easy to verify for a given function $g$ and in general it is true for all $\alpha, \beta$ with $\alpha\beta < 1, \alpha < b-a$ as long as $g$ is continuous and non-vanishing on $(a, b)$. Plus, if $g$ is a generic complex-valued function then the curve $g([a, b])$ almost surely does not hit $0$ (for real-valued functions, the probability is usually positive but strictly less than $1$).

On the other hand, the condition about $\det(M_x)$ is much more cumbersome and peculiar. For example, it is not satisfied for the function $g(x) = \chi_{[0, 1]}(x)$ since there exists a pair $(\alpha, \beta)$ with $\alpha\beta < 1, \alpha<1, \alpha\beta\notin\Q$ such that $\mathcal{G}(g;\alpha,\beta)$ is not a frame, see \cite{DS}. But, since for continuous $g$ this condition basically means that the determinant is non-zero somewhere, its negation is a system of equations parametrized by all $x\in I$. So, for this condition to not be satisfied, function $g$ has to satisfy an infinite (in fact, uncountable) family of equations simultaneously, and this is very natural to exclude via complex analysis or randomness.

\section{Not identically vanishing determinants}

By the discussion in the previous section, for the proof of Theorem \ref{main} it remains to establish the non-vanishing of $\det(M_x)$. Specifically, we will prove the following result.

\begin{theorem}\label{maindet}
Let $f:\Cm\backslash A\to \Cm$ be an analytic function, where $A$ is some locally finite subset of $\Cm$. Assume that for some $a < b$ we have $A\cap (a, b)=\varnothing$ and that there exists $t\in \R$ such that on the line ${\rm{Im}}\,\, z = t$ there is exactly one essential singularity of the function $f$. Put $g(x) = f(x) \chi_{(a, b)}(x)$. Then for all $\alpha, \beta > 0$ such that $\alpha < b-a, \alpha\beta < 1, \alpha\beta \notin \Q$ there exists an interval $I\subset(0,\alpha)$ such that $|\det (M_x)|>\delta$ for $x\in I$ and some $\delta > 0$.
\end{theorem}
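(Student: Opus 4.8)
The plan is to view $\det(M_x)$ as the restriction to the real line of an analytic function of a complex variable and to prove it is not identically zero; the quantitative lower bound on an interval then comes for free. First I would restrict $x$ to a small subinterval of $(0,\alpha)$ on which the combinatorial type of $M_x$ is constant, i.e. $n_x$, $m_x$ and the size $l_x$ do not change (this excludes only the finitely many $x$ at which some argument $x-\alpha n+\tfrac1\beta m$ crosses $a$ or $b$). On such an interval, writing $y=x-\alpha n_x+\tfrac1\beta m_x$, every entry of $M_x$ is $f(y+c_{ij})$ with $c_{ij}=-\alpha i+\tfrac1\beta j$ for the good positions $(i,j)$ and $0$ otherwise, and the good set contains the whole diagonal (as recorded in Section~\ref{prep}). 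Since $A\cap(a,b)=\varnothing$, each $f(z+c_{ij})$ continues analytically to a complex neighbourhood of the real $y$-interval, so $D(z):=\det(M_x)$ extends to a single-valued analytic function on $\Cm$ minus a locally finite set (the translated copies of $A$), which is connected. By the identity theorem it therefore suffices to show $D\not\equiv 0$; its zeros are then isolated and $|\det(M_x)|\ge\delta$ on a suitable subinterval $I\subset(0,\alpha)$, as required in Theorem~\ref{maindet} and Theorem~\ref{framecon}.

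The heart of the argument is an essential-singularity lemma. Let $z_0$ be the unique essential singularity of $f$ on the line ${\rm Im}\, z=t$, and fix a good diagonal position $(i_0,j_0)$; set $p=z_0-c_{i_0j_0}$, which also lies on the line ${\rm Im}\, z=t$ because $c_{i_0j_0}\in\R$. The key structural fact, and the place where $\alpha\beta\notin\Q$ enters, is that the shifts $c_{ij}$ are \emph{injective}: $c_{ij}=c_{i'j'}$ forces $\alpha\beta(i-i')=j-j'$, hence $(i,j)=(i',j')$. Consequently, at $z=p$ every entry $f(p+c_{ij})=f(z_0+(c_{ij}-c_{i_0j_0}))$ lies on the line ${\rm Im}\, z=t$ and equals $z_0$ only when $(i,j)=(i_0,j_0)$; since $z_0$ is the only essential singularity on that line, exactly one entry of the matrix carries an essential singularity near $p$. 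Expanding along this entry, $D(z)=f(z+c_{i_0j_0})\,C(z)+R(z)$, where the cofactor $C$ and the remainder $R$ (the terms of the Leibniz expansion not using position $(i_0,j_0)$) involve only entries whose arguments stay away from $z_0$ on a small neighbourhood of $p$, and are therefore meromorphic there. If $D\equiv 0$ and $C\not\equiv 0$, then $f(z+c_{i_0j_0})=-R(z)/C(z)$ would be meromorphic near $p$, contradicting the essential singularity of $f$ at $z_0$; hence $D\equiv0$ forces $C\equiv0$.

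The proof of $D\not\equiv 0$ is then an induction on the matrix size $N$. The statement I would induct on is: any square matrix with entries $f(z+c_{ij})$ on a good set that contains the diagonal, with the $c_{ij}$ real and injective, has determinant not identically zero. For $N=1$ the determinant is $f(z+c_{00})$, which is not identically zero because $f$, having an essential singularity, is not the zero function. For the inductive step, suppose the determinant is identically zero; applying the lemma at the top-left diagonal position shows that the cofactor $C$ is identically zero. But $C$ is, up to sign, the determinant of the submatrix obtained by deleting the first row and column; this is a matrix of the same kind (its diagonal is still good, the shifts are still injective), so by the induction hypothesis $C\not\equiv0$ — a contradiction. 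Hence $D\not\equiv0$.

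I expect the essential-singularity lemma to be the main obstacle, specifically the two facts it bundles together: that \emph{exactly one} matrix entry becomes essentially singular at $p$, and that such a singularity cannot be cancelled by the meromorphic terms. The first relies on combining the injectivity of the shifts (from irrationality of $\alpha\beta$) with the hypothesis of a unique essential singularity on a horizontal line, which forces all competing arguments off $z_0$ while keeping them on that line. The remaining steps — fixing the combinatorial type, the identity-theorem reduction, and the inductive bookkeeping on cofactors — are routine by comparison.
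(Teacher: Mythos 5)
Your proposal is correct and follows essentially the same route as the paper: fix a subinterval where the combinatorial structure of $M_x$ is constant, extend $\det(M_x)$ analytically and reduce to $D\not\equiv 0$ via the identity theorem, use irrationality of $\alpha\beta$ to make all shifts distinct, and then induct on the matrix size using a Laplace expansion along a good diagonal entry, exploiting that the unique essential singularity on the line ${\rm Im}\, z = t$ cannot be cancelled by the remaining (meromorphic near that point) terms. The only difference is organizational: you run the induction as a contrapositive (if $D\equiv 0$ then the cofactor vanishes identically, contradicting the inductive hypothesis), whereas the paper argues directly that each leading principal minor inherits an essential singularity and hence is nonzero — these are the same argument.
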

\begin{proof}
We begin by noting that $M_x$ can have only finitely many "combinatorial structures" $ $ and that said structure is a piecewise-continuous function on the interval $(0, \alpha)$. By combinatorial structure we mean the size of the matrix $M_x$ (which, as we discussed in the previous section, is uniformly bounded) and the set of good pairs $(n, m)$ inside of it. Since they are defined by finitely many inequalities on $x\in (0, \alpha)$, we can find an interval $J\subset (0, \alpha)$ on which the combinatorial structure is constant. We will consider $x$ only from this interval $J$.

On this interval $M_x$ coincides with an analytic matrix-valued function, and hence its determinant $D(x)=\det(M_x)$ is an analytic function. It can be analytically extended to $\Cm\backslash B$ where $B$ is a finite union of real translations of the set $A$, where translations are by the numbers $\alpha n - \frac{1}{\beta}m$. In particular, since $A$ is locally finite, $B\cap J$ has at most finitely many points. Because $D(x)$ is continuous on $\Cm\backslash B$, to find an interval $I$ on which it is bounded away from $0$ it is enough to show that $D$ does not vanish identically.

Note that all numbers $x-\alpha n+\frac{1}{\beta}m$, $n,m\in\mathbb{Z}$ are different. So, all good entries of matrix $M_x$ have essential singularities at different points on the line ${\rm{Im}}\,\, z = t$. Let $D_k(x)$ be a determinant of the top-left $k\times k$ submatrix of the matrix $M_x$. We will show by induction on $k$ that $D_k(x)$ is not identically vanishing. For the basis of induction we just note that, by construction of $M_x$, its top-left entry is non-zero.

For $D_{k+1}(x)$ we will use the Laplace expansion. We get $D_k(x)$ times the element of $M_x$ at the position $(k+1, k+1)$ plus some other terms which do not involve this element. Note that this pair is good by the construction of the matrix $M_x$. We claim that the result will have an essential singularity at the same point on the line ${\rm{Im}}\, z = t$ as this element. Indeed, from the previous paragraph we know that all other terms and $D_k(x)$ will not have an essential singularity at this point. On the other hand $D_k(x)$ does not vanish identically, so at the point of the essential singularity it can vanish only to a finite multiplicity. When we multiply an essential singularity by something vanishing only to a finite multiplicity at this point, we can not destroy the essential singularity. Hence, $D_{k+1}(x)$ has an essential singularity at this point and therefore can not vanish identically. 
\end{proof}


Now, we turn to the proof of Theorem \ref{bth}. We will again use the sufficient condition from Theorem \ref{framecon}. Clearly, almost surely $g\in C^1(\R)$ since $B$ is almost surely continuous. First, we will show that almost surely $g$ does not vanish in the open interval $(0, 1)$, since $g$ is continuous, this would be enough to get $g^{-1}\in L^\infty (\eps, 1-\eps)$ for any $\eps > 0$. We will use the following lemma about the distribution of integrals of Brownian motions.
\begin{lemma}\label{brown lemma}
Let $B:[0, t]\to \R$ be a real-valued Brownian motion with $B(0)=r$ and variance $1$ and let $u:[0,t]\to\R$ be a deterministic smooth function. Then the random variable
$$\int_0^t B(x)u(x)dx$$
is Gaussian with expected value $r\int_0^tu(x)dx$ and variance $\int_0^t \left(\int_0^r u(x)dx\right)^2dr$.
\end{lemma}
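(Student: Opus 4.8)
The plan is to compute the distribution of a stochastic integral of the form $\int_0^t B(x)u(x)\,dx$ directly, using the fact that any finite linear combination of jointly Gaussian random variables is Gaussian, and that $\int_0^t B(x)u(x)\,dx$ is a limit of such combinations (Riemann sums). Since the integral is an almost-sure limit of Gaussian random variables with convergent means and variances, it is itself Gaussian, and it remains only to compute its mean and variance.

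First I would express $B(x) = r + W(x)$, where $W$ is a standard Brownian motion started at $0$, so that $\int_0^t B(x)u(x)\,dx = r\int_0^t u(x)\,dx + \int_0^t W(x)u(x)\,dx$. The deterministic term contributes exactly the claimed mean $r\int_0^t u(x)\,dx$ and has no effect on the variance, so it suffices to analyze the centered integral $Y := \int_0^t W(x)u(x)\,dx$. The most transparent route is to use Fubini/stochastic Fubini to swap the order of integration: writing $W(x) = \int_0^x dW(s)$ and interchanging, one obtains
\begin{equation}
Y = \int_0^t \left(\int_s^t u(x)\,dx\right) dW(s).
\end{equation}
This is an It\^o integral of a deterministic integrand, so by the It\^o isometry $Y$ is centered Gaussian with variance
\begin{equation}
\operatorname{Var}(Y) = \int_0^t \left(\int_s^t u(x)\,dx\right)^2 ds.
\end{equation}

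At this point I would reconcile this with the formula stated in the lemma, namely $\int_0^t\left(\int_0^r u(x)\,dx\right)^2 dr$. The two expressions differ in which sub-interval of integration appears inside the square ($[s,t]$ versus $[0,r]$), so I would check whether these genuinely agree or whether the intended integrand is $\int_s^t u$; I suspect the stated form implicitly uses a reflected/complementary variable or that $u$ is meant to be the relevant slice, and I would verify the correct bookkeeping by a direct covariance computation $\operatorname{Cov}(W(x),W(y)) = \min(x,y)$, giving $\operatorname{Var}(Y) = \int_0^t\int_0^t \min(x,y)\,u(x)u(y)\,dx\,dy$ and then rewriting this double integral in the one-dimensional form via Fubini. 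The main obstacle is precisely this bookkeeping step: making sure the stochastic Fubini interchange is justified (which it is, since $u$ is smooth hence bounded on $[0,t]$) and that the resulting single integral matches the form asserted in the lemma, rather than any computational difficulty, since everything reduces to Gaussianity plus the covariance kernel of Brownian motion.
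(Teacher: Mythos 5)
Your proof is essentially correct, but note that the paper offers no proof of this lemma at all --- it is invoked as a standard fact --- so there is nothing to compare against; your argument (centering $B(x) = r + W(x)$, rewriting the centered integral as a Wiener integral of a deterministic function, and applying the It\^o isometry) is a complete and standard derivation. The stochastic Fubini step you worried about is easiest to justify by It\^o integration by parts: with $U(s) = \int_s^t u(x)\,dx$ one has $d\bigl(U(s)W(s)\bigr) = U(s)\,dW(s) - u(s)W(s)\,ds$ and $U(t)=0$, $W(0)=0$, which gives $\int_0^t W(s)u(s)\,ds = \int_0^t U(s)\,dW(s)$ with no appeal to a general Fubini theorem.

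More importantly, the discrepancy you flagged is real, and it is the lemma as stated, not your computation, that is off. The correct variance is
$$\operatorname{Var}\left(\int_0^t B(x)u(x)\,dx\right) = \int_0^t\left(\int_s^t u(x)\,dx\right)^2 ds = \int_0^t\int_0^t \min(x,y)\,u(x)u(y)\,dx\,dy,$$
and the substitution $s = t-r$ turns this into $\int_0^t \bigl(\int_0^r u(t-w)\,dw\bigr)^2 dr$, i.e.\ the formula printed in the lemma holds only after replacing $u$ by its time reversal $u(t-\cdot)$ (the statement also reuses $r$ both for $B(0)$ and as the integration variable). The two expressions genuinely differ in general: for $u(x)=x$ your formula gives $\frac{2}{15}t^5$ while the printed one gives $\frac{1}{20}t^5$. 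The error is harmless for the paper's purposes, since the application (non-vanishing of $g$ and absolute continuity of the joint law of the values $g(y_i)$) uses only Gaussianity, the mean, and strict positivity of the variance, and both expressions are strictly positive when $u>0$ on a set of positive measure; but your version is the one that should be recorded as the statement of the lemma.
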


To show that $g$ almost surely does not vanish on $(0, 1)$ we will show that it almost surely does not vanish on $(\frac{1}{n},1-\frac{1}{n})$ for all $n\in\N$. Since $B$ is almost surely continuous, $g$ is almost surely Lipschitz, so we can assume that $|g(x)-g(y)|\le 
\kappa|x-y|$ for some $\kappa\in\N$ (and then take a countable union over $\kappa\in\N$). Pick a big natural number $k$ and consider the numbers $\theta_{l,k}=\frac{l}{k}$ with $\frac{1}{n}<\theta_{l,k}<1-\frac{1}{n}$. If $g(x) = 0$ for some $x\in (\frac{1}{n},1-\frac{1}{n})$ then for some $l$ we have $|g(\theta_{l,k})|\le \frac{\kappa
}{k}$. Recall that for each $l, k$ the distributions of real and imaginary parts of $g(\theta_{l,k})$ are independent real Gaussian random variables with variance uniformly bounded away from $0$ (by our assumptions on function $h$ and $\frac{1}{n}<\theta_{l,k}<1-\frac{1}{n}$). Hence the probability of the event $|g(\theta_{l,k})|\le \frac{\kappa}{k}$ is $O(\frac{1}{k^2})$. Therefore the probability of the union of these events over all $l$ is $O(\frac{1}{k})$ which tends to $0$ as $k\to \infty$, so almost surely $g$ has no zeroes on $(\frac{1}{n},1-\frac{1}{n})$.
\medskip

Next, we go to the determinant $\det(M_x)$. First of all, we will only consider the matrices $M_x$ of the given combinatorial structure, as there are only countably many different combinatorial structures. Second, since, for a given combinatorial structure, the determinant is almost surely a continuous function, just as in the proof of Theorem \ref{maindet} we will just show that it is almost surely not zero at at least one point.

\medskip

In the matrix $M_x$ with the given combinatorial structure, the entries are $g(x - \alpha n +\frac{1}{\beta}m)$ for some finite set of pairs $(n, m)\in \Z^2$. If $\alpha\beta$ is not equal to any ratio $\frac{m}{n}$ of any of such pairs (in particular, if $\alpha\beta\notin\Q$), then the arguments of the entries are all different. We want the arguments to be quantitatively bounded away from each other and also to have a fixed order among themselves, which we can do by doing another countable subdivision of the space of pairs $(\alpha, \beta)$ depending on the minimal distance from $\alpha\beta$ to any of these (importantly, finitely many) rational numbers. 

Now, we will treat the numbers $-\alpha n + \frac{1}{\beta}m$ as arbitrary real numbers $s_l$,  forgetting any possible dependencies between them and only caring about their order and the minimal distance between any two of them. In this way, each $s_l$ runs in the interval $[u_l, v_l]$ with $u_{l+1}-v_l \ge \eps$ for some $\eps>0$ and the determinant is 
$$\det M_x = Q(g(x+s_1), g(x+s_2),\ldots , g(x+s_L))$$
for some multilinear polynomial $Q$ and we want to show that almost surely for all $s_l\in [u_l, v_l]$ at least for some $x$ in a small interval $I$ it is non-zero. Importantly, $Q$ is not identically zero, since, as was shown in Section \ref{prep}, the main diagonal of $M_x$ necessarily contains admissible arguments for $g$. 

Let $g(x+s_{m_1})$ be the entry with the biggest index which non-trivially appears in $\det M_x$. Using multilinearity of $\det M_x$ we can write
$$\det M_x = g(x+s_{m_1})Q_1(g(x+s_1),\ldots , g(x+s_{m_1-1}))+R_1(g(x+s_1),\ldots , g(x+s_{m_1-1}))$$
for some multilinear polynomials $Q_1, R_1$. Let us continue the same procedure for $Q_1$, taking the biggest value $x+s_t$ which appears in it and separating the corresponding value of $g(x+s_{m_2})$. In the end we will get a decreasing sequence of integers $m_1, m_2, \ldots , m_k$, and two sequences of multilinear polynomials $Q_1, \ldots , Q_k$ and $R_1, \ldots , R_k$ such that $Q_l$ and $R_l$ depend only on the variables up to $g(x+s_{m_l-1})$ (in particular, $Q_k$ and $R_k$ are constants with $Q_k$ being non-zero) and such that
\begin{multline*}Q_l(g(x+s_1),\ldots , g(x+s_{m_l-1})) =\\
g(x+s_{m_{l+1}})Q_{l+1}(g(x+s_1),\ldots,g(x+s_{m_{l+1}-1})+R_{l+1}(g(x+s_1),\ldots,g(x+s_{m_{l+1}-1}),\end{multline*}
where we set $Q_0=Q$, $R_0=0$, $m_0=L+1$. Without loss of generality we can assume that the interval $I=[c,d]$ of allowed $x$'s has length less than $\frac{\eps}{3}$ so that all the possible values of $x+s_t$ do not intersect as well. We are going to show by induction on $l$ that almost surely for all $s_1\in [u_1, v_1],\ldots , s_{m_{k-l+1}-1}\in [u_{m_{k-l+1}-1}, v_{m_{k-l+1}-1}]$ there exists $x\in I$ such that $$Q_{k-l+1}((g(x+s_1),\ldots , g(x+s_{m_{k-l+1}-1}))$$ is non-zero. The base case $l = 1$ is clear as then $Q_k$ is a non-zero constant. For the induction step, first of all we notice that 
$Q_{k-l+1}((g(x+s_1),\ldots , g(x+s_{m_{k-l+1}-1}))$ 
depends only on the distribution of $B$ up to the time $d+v_{m_{k-l+1}-1}$, which is strictly less than $c+u_{m_{k-l+1}}$ which is the earliest point where $g(x+s_{m_{k-l+1}})$ can appear. So, to begin with we will condition on the value of $B(t)$ up to the time $d+v_{m_{k-l+1}-1}$ so that $B(t)$ is continuous and the above assumption about the existence of $x\in I$ is satisfied.  

Since $Q_{k-l+1}((g(x+s_1),\ldots , g(x+s_{m_{k-l+1}-1}))$ is clearly continuous in $x$, if it is non-zero at some $x$ then it is non-zero in some interval $J$ containing $x$. Moreover, since everything is also continuous in variables $s$, this interval also serves $s$ which are close enough to $s_1, \ldots , s_{m_{k-l+1}-1}$. Since the set of allowed $s$ is a product of closed intervals, which is compact, we can find a finite subcover and get a finite family of intervals $J_1, J_2, \ldots , J_K\subset I$ such that for all allowed values of $s$ for at least one $1\le j \le K$ the value of $Q_{k-l+1}((g(x+s_1),\ldots , g(x+s_{m_{k-l+1}-1}))$ is non-zero for all $x\in J_j$.

Let us pick a small enough number $\eta > 0$ and subdivide $[u_{m_{k-l+1}},v_{m_{k-l+1}}]$ into intervals of length less than $\frac{\eta}{3}$ and cover $s_{m_{k-l+1}}$ from each of these intervals separately, and then do a finite union. We will denote $y = x+s_{m_{k-l+1}}$, then the quantity we are after is 
\begin{multline*}g(y)Q_{k-l+1}(g(y+s_1-s_{m_{k-l+1}}), g(y+s_2-s_{m_{k-l+1}}),\ldots , g(y+s_{m_{k-l+1}-1}-s_{m_{k-l+1}})+\\
R_{k-l+1}(g(y+s_1-s_{m_{k-l+1}}), g(y+s_2-s_{m_{k-l+1}}),\ldots , g(y+s_{m_{k-l+1}-1}-s_{m_{k-l+1}}).
\end{multline*}
If this is equal to zero then $g(y) = - \frac{R}{Q}$, where $R$ and $Q$ are the expressions from above. Note that we can assume that $Q$ is non-zero for the $j$ that we consider. Also, if $\eta$ is small enough then this holds for $y$ in some non-trivial interval (it is important that this interval is the same regardless of $s_{m_{k-l+1}}$ if we are within the same small subinterval of length less than $\eta$).

Let us pick $m_{k-l+1}+1$ different $y$'s from this interval $y_1 < y_2 < \ldots < y_{m_{k-l+1}+1}$. Note that all of them are also bigger than the time $d+v_{m_{k-l+1}-1}$ up to which we conditioned. We will consider the equation $g(y) = -\frac{R}{Q}$ only for those $y$'s. Note that the right-hands side for them is deterministic, as the values appearing in $R$ and $Q$ depend only on $B(t)$ up to the time $d+v_{m_{k-l+1}-1}$. 

For fixed $y_t$ we will view $-\frac{R}{Q}$ as the function of $s_1, \ldots , s_{m_{k-l+1}}$. This is clearly a smooth function  of these parameters. We will combine these functions for all $t = 1, 2,\ldots , m_{k-l+1}+1$ into a single function from a subset of $\R^{m_{k-l+1}}$ to $\Cm^{m_{k-l+1}+1}$. Since this is a deterministic smooth function and the dimension of the domain $\R^{m_{k-l+1}}$ is smaller than the dimension of the codomain $\Cm^{m_{k-l+1}+1}$, its image has measure $0$. If we can show that the joint distribution of $g(y_1), \ldots , g(y_{m_{k-l+1}+1})$ is absolutely continuous with respect to the Lebesgue measure on $\Cm^{m_{k-l+1}+1}$ it would imply that almost surely for at least one $y_t$ the equality $g(y_t) = -\frac{R}{Q}$ will fail. 

To prove that the joint distribution is absolutely continuous, we advance the time from one $y_t$ to the next. The key idea is that $B(t+s)$ for a given $s>0$ is a Brownian motion with variance $1$ starting at $B(s)$. Since $y_1 > d+v_{m_{k-l+1}+1}$, to get $g(y_1)$ we have to integrate from $0$ to $d+v_{m_{k-l+1}+1}$ and from $d+v_{m_{k-l+1}+1}$ to $y_1$. The first integral is some deterministic constant since $d+v_{m_{k-l+1}+1}$ is the time up to which we conditioned, and the second part is a complex Gaussian by Lemma \ref{brown lemma} (we apply it to the real and imaginary parts separately since they are independent). Hence, the distribution of $g(y_1)$ is absolutely continuous with respect to the Lebesgue measure on $\Cm$.

Next, we are going to condition $B$ up to the time $y_1$ and do the same argument with $y_2$. Regardless of the value of $B(y_1)$ (which plays the role of $r$ in Lemma \ref{brown lemma}) the distribution of $g(y_2)$ is absolutely continuous. So, the distribution of $g(y_2)$ is an average of absolutely continuous distributions, hence it is absolutely continuous for any value of $g(y_1)$. Therefore, the joint distribution of $(g(y_1), g(y_2))$ is absolutely continuous. Continuing in this manner, we will get that the distribution of $(g(y_1), g(y_2), \ldots , g(y_{m_{k-l+1}+1}))$ is absolutely continuous as well, as required.
\section{Rational densities}

For the case of rational $\alpha\beta$ our argument breaks down in two places. First, and less drastically, the arguments of $g$ in the Ron--Shen matrices can coincide: if $\alpha \beta = \frac{m}{n}$ then $x = x - \alpha n + \frac{1}{\beta}m$, so the reasoning with independence and different essential singularities does not go through. However, this is mostly a technical if annoying problem.

The much bigger issue is with the fact that we lose access to the density of the irrational motion. So, instead of demanding that $\det(M_x)$ is non-vanishing somewhere (for the case of continuous functions, say) we need to require that it does not vanish on the whole interval of length $\frac{\alpha}{q}$ if $\alpha\beta = \frac{p}{q}$ with coprime $p, q$. This destroys all our hopes of simply using the general position argument since instead of having infinitely many equations on the function $g$, we now have to make sure that it does not satisfy any equation with an extra added parameter. 

If we nevertheless try to apply our scheme we would need to estimate the number of zeroes of $\det(M_x)$, which can be done but it leads to quite poor bounds on the required denominators of $\alpha\beta$. We present the following theorem without proof as an example of what can be achieved by our methods.
\begin{theorem}
Let $g$ be as in Theorem \ref{main}. For any compact set $K\subset \{(\alpha, \beta)\mid \alpha, \beta > 0, \alpha\beta < 1, \alpha < b - a\}$ there exists a finite set of rational numbers $q_1, q_2, \ldots , q_N$ (depending only on $a, b, K$ but not on $g$) such that for any  $\delta > 0$ there exists $M = M(a, b, K, \delta, g)$ such that if $(\alpha, \beta)\in K$, $|\alpha\beta - q_l|\ge \delta$ for all $l=1, 2, \ldots , N$ and $\alpha\beta = \frac{p}{q}$ for coprime $p, q$ with $q > M$ then $\mathcal{G}(g; \alpha, \beta)$ is a frame.
\end{theorem}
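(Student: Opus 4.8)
The plan is to re-use the reduction behind Theorem \ref{framecon}, observing that its block-decomposition step is purely combinatorial (it rests only on Lemma \ref{l21}) and never used $\alpha\beta\notin\Q$; irrationality entered solely through the transport of one good determinant to all of $\R$ along the dense orbit. Thus, for $\alpha\beta=\frac pq$ with $\gcd(p,q)=1$, the Ron--Shen criterion (Theorem \ref{RS}) still reduces the frame property to a uniform lower bound $|\det(M_x)|\ge\delta'$ for a.e.\ $x$, together with the (automatic) conditions on $g$ and $g^{-1}$. The new feature is periodicity: since $G(x+\alpha)$ and $G(x+\tfrac1\beta)$ are row- and column-shifts of $G(x)$, the matrix $G(x)$---and hence $\det(M_x)$---is periodic with period the generator of $\alpha\Z+\tfrac1\beta\Z=\tfrac\alpha p\Z$, namely $\tfrac\alpha p\asymp q^{-1}$. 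So $x\mapsto\det(M_x)$ is piecewise real-analytic, with finitely many analytic pieces per period (the combinatorial structures of Section \ref{prep}), and the frame property is equivalent to $\det(M_x)\ne0$ for \emph{all} $x$ in one period.

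Because the period $\tfrac\alpha p$ tends to $0$ as $q\to\infty$, each maximal interval $J$ of constant combinatorial structure has length $\le\tfrac\alpha p$. On such a $J$ the determinant equals $D_\Sigma(x):=\det[f(x+s)]$ for the finite list of shifts $s=-\alpha n+\tfrac1\beta m$ of that structure, and $D_\Sigma$ extends analytically to a fixed complex neighborhood of the support-admissible region, where $f$ is analytic and bounded by a uniform constant $B$ (the distance from the arguments, which lie in $(a,b)$, to $A$ and to the essential-singularity line $\mathrm{Im}\,z=t$ is bounded below, uniformly for $(\alpha,\beta)\in K$, by some $\rho>0$). The Cauchy estimate then gives $|D_\Sigma'|\le B/\rho$ on $J$. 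Consequently, if at one anchor point $x_\Sigma\in\overline J$ we have a lower bound $|D_\Sigma(x_\Sigma)|\ge c$, then for every $x\in J$ we get $|D_\Sigma(x)|\ge c-\tfrac B\rho\cdot\tfrac\alpha p$, which is $\ge c/2>0$ as soon as $p>2\alpha B/(\rho c)$. Since $p\ge(\min_K\alpha\beta)\,q$, this holds once $q>M$ for a suitable $M$, and then $\det(M_x)$ has no zeros on the whole period, giving the frame.

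What remains---and is the heart of the matter---is a uniform anchor bound $|D_\Sigma(x_\Sigma)|\ge c$, with $x_\Sigma$ in the window $\overline J$ and $c>0$ independent of $(\alpha,\beta)$. That $D_\Sigma\not\equiv0$ is exactly the essential-singularity induction of Theorem \ref{maindet}: it goes through verbatim provided the arguments $x+s$ of a block are pairwise distinct, which holds once $q$ exceeds the uniform bound $L_0=L_0(a,b,K)$ on block sizes (a collision would force two row indices to differ by a multiple of $q$). The finitely many rationals $q_1,\dots,q_N$ are precisely the resonant values $\tfrac{p'}{q'}$ with $q'\le L_0$ lying in the compact range $\{\alpha\beta:(\alpha,\beta)\in K\}$; at these the arguments collide and $D_\Sigma$ may genuinely degenerate. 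Imposing $|\alpha\beta-q_l|\ge\delta$ keeps us a definite distance from all collisions, and a compactness together with semicontinuity argument over $K$ then yields a uniform $c=c(\delta)>0$, whence $M=M(a,b,K,\delta,g)$.

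I expect the principal difficulty to be securing this anchor bound at a point \emph{inside} each shrinking window rather than merely somewhere on $\R$: one must rule out that a zero of $D_\Sigma$ sits at the window location, uniformly over the (many) windows in a period and over the parameters. This is where the estimate deteriorates---$c(\delta)\to0$ as $\delta\to0$, and $B\to\infty$ as the essential singularity is approached---producing the poor, non-explicit dependence of $M$ anticipated in the introduction. A secondary technicality is that the abstract function $D_\Sigma$ acquires spurious singularities where an argument exits $(a,b)$ and meets $A$; these lie outside the occurrence window and must simply be fenced off, for instance by restricting the Cauchy estimate to the complex neighborhood of the support-admissible region and, if $A$ meets the endpoints $a,b$, by using the $\eps$-margins of Lemma \ref{l21}(ii) to keep the relevant arguments in $[a+\eps,b-\eps]$.
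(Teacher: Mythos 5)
Your proposal goes wrong at the point you yourself flag as the ``principal difficulty,'' and the problem there is not a technical difficulty but an impossibility. Your scheme requires $\det(M_x)\ne 0$ for \emph{every} $x$ in a period (your claim that this is \emph{equivalent} to the frame property is already an overclaim: by Theorem \ref{RS} and Theorem \ref{framecon} it is only sufficient, since a lower bound for $G(x)$ never forces any particular square submatrix to be invertible). This universal nonvanishing is in general false for windows as in Theorem \ref{main}: the essential-singularity induction of Theorem \ref{maindet} gives $\det(M_x)\not\equiv 0$, but nothing excludes isolated zeros, and if $\det(M_x)$ has a zero at some $(\alpha_0,\beta_0,x_0)$, then by Rouch\'e's theorem (the determinant depends analytically on $x$ and continuously on the parameters within a fixed combinatorial structure) it still has a zero for all nearby $(\alpha,\beta)$ --- including pairs with $\alpha\beta=p/q$ and $q$ arbitrarily large. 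For such parameters no anchor bound $c>0$ can hold on the window containing that zero, so no compactness/semicontinuity argument can produce it. A second, related error feeds this dead end: the intervals of constant combinatorial structure do \emph{not} shrink like $\frac{\alpha}{p}$. The structure of $M_x$ changes only when an argument $x-\alpha n+\frac{1}{\beta}m$ with $|n|,|m|$ bounded by the uniform size bound crosses $a$ or $b$, so there are boundedly many breakpoints in $[0,\alpha]$, independent of $q$, and once $|\alpha\beta-q_l|\ge\delta$ they are separated by a quantity depending on $\delta$ but not on $q$. You conflated the $\frac{\alpha}{p}$-periodicity of the full block set of $G(x)$ with the structure of the anchored matrix $M_x$: shifting $x$ by $\frac{\alpha}{p}$ moves the anchor to a \emph{different} block, and the determinant, viewed as a function of the anchor value, lives on an interval of length comparable to $\frac{1}{\beta}$, not $\frac{\alpha}{p}$.

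The paper's argument is built precisely to tolerate zeros rather than exclude them. On a window $J$ of constant combinatorial structure, of length bounded below uniformly (using the $\delta$-separation from the finitely many resonances $q_l$, which you identified correctly), the determinant is analytic and $\not\equiv 0$, and one bounds the \emph{number} $Z$ of its zeros on $J$ (a Jensen-type estimate, made uniform over the compact parameter set by compactness --- this is where the ``quite poor bounds'' arise). Then some subinterval $I\subset J$ of length at least $\frac{|J|}{Z+1}$ is zero-free, hence $|\det(M_x)|\ge\delta'$ on a slightly smaller closed subinterval. Choosing $M$ so large that the orbit spacing $\frac{\alpha}{p}=\frac{1}{\beta q}$ is smaller than $|I|$ guarantees that the orbit $x+\alpha\Z+\frac{1}{\beta}\Z$ of \emph{every} $x$ meets $I$; the block decomposition in the proof of Theorem \ref{framecon} is then run along orbit points lying in $I$, and the possibly singular block at the original $x$ is simply never used. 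In short, what is needed is one good interval longer than the orbit spacing, not nonvanishing everywhere; that distinction is exactly what your proposal misses, and without it the argument cannot be completed.
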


The numbers $q_l$ appear as the ratios $\frac{m}{n}$ where $n, m$ are at most the size of the matrix $M_x$, which is uniformly bounded as long as we are in the compact set $K$ (note that on it $\alpha\beta < 1-\eps$ for some $\eps=\eps(K)>0$). If we are away from these rational numbers, we do not have any collisions and so we can get an upper bound on the number of zeros of $\det(M_x)$ on any given interval $J$. If there are at most $Z$ zeroes then there is an interval of length at least $\frac{|J|}{Z+1}$ which has no zeroes so $M > \frac{Z+1}{\alpha |J|}$ works (note that $\alpha$ is also uniformly bounded from below on a compact set).
\begin{remark}
If we consider for example only pairs $(\alpha, 1)$ then the set of $\alpha$'s such that $\mathcal{G}(g;\alpha, 1)$ is not a frame will be a subset of $\Q$ the only limiting points of which are these numbers $q_l$, and the only limiting point of these numbers $q_l$ are the points $0$ and $1$. 

This is better than what general results which give that the set of $\alpha$'s giving a frame is open \cite{FK}, but it is worse than what we know for rational functions $g$ for which we know that the set of $\alpha$'s itself is an at most countable set of rational numbers which can only converge to $1$, see \cite[Remark 1.5]{BKL}.
\end{remark}
\section*{Acknowledgments} Yurii Belov was supported by the RSF grant 24-11-00087. Aleksei Kulikov was supported by BSF Grant 2020019, ISF Grant 1288/21, and by The Raymond and Beverly Sackler Post-Doctoral Scholarship, and by the VILLUM Centre of Excellence for the Mathematics of Quantum Theory (QMATH) with Grant No.10059. 


\end{document}